\def\C{{\mathbb C}}
\def\P{{\mathbb P}}
\newtheorem{defi}{D\'{e}finition}[section]
\newtheorem{thm}[defi]{Theorem}
\def\P{\mathbb{P}}
\def\O{{\mathcal O}}
\def\eqref#1{(\ref{#1})}
\def\:{\colon }
\title{A vector bundle proof of Poncelet theorem}
\author{Jean Vallès \footnote{Author partially supported by ANR-09-JCJC-0097-0 INTERLOW
and ANR GEOLMI}}
\begin{document}

\maketitle

\begin{abstract}

In the town  of Saratov where he was prisonner, Poncelet, continuing the work of Euler and Steiner on 
polygons simultaneously inscribed in a circle and circumscribed around an other circle, proved the following generalization. 

\smallskip

\textbf{Theorem}
\textit{ Let $C$ and $D$  be two  smooth conics in $\P^2(\C)$. If 
 $D$ passes through the $\binom{n}{2}$ vertices of a complete polygon with $n$ sides tangent to $C$ then 
$D$ passes through the vertices of infinitely many such polygons.}

\smallskip

According to Berger \cite{Be} this theorem is the nicest result about the geometry of conics. Even if it is,  there are few proofs of it.
To my knowledge there are only three.
The first proof, published in 1822 and  based on infinitesimal deformations, is due to Poncelet (\cite{Po}).
Later, Jacobi proposed a new proof based on finite order points on elliptic curves; his proof, certainly the most famous, is  
explained in a modern way and  in detail by Griffiths and Harris  (\cite{GH}).  In 1870 Weyr proved a Poncelet theorem 
in space (more precisely for two quadrics) that implies  the one above when one quadric is a cone; this proof is 
explained by Barth and Bauer (\cite{BB}). 

\medskip

 Our aim in this short note is to involve vector bundles techniques to propose a 
new proof of this celebrated result. Poncelet did not appreciate Jacobi's for the reason that  it was too far 
from the geometric intuition. I guess that he would not appreciate our proof either   for the same reason.

\end{abstract}
\section{Preliminaries}
\label{prelim}
In all this text the ground field is $\C$. A set of    
$n$ vertices connected by $n$ distinct lines form a $n$-gon, when a set consisting of $n$ distinct lines with  their  $\binom{n}{2}$ vertices  form a complete $n$-gon.

 \begin{figure}[h!]
    \centering
    \includegraphics[height=6.5cm]{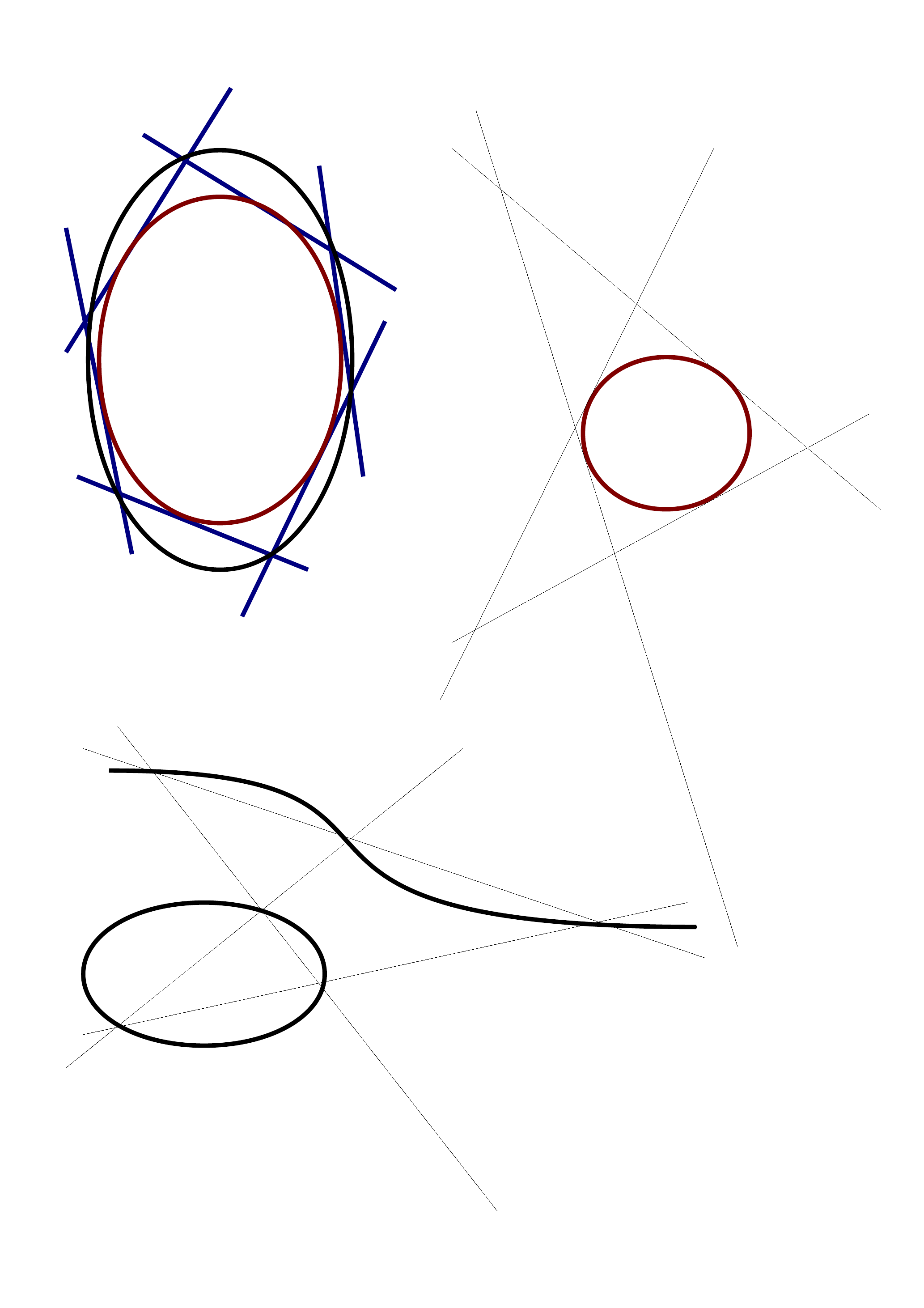}
    % \qquad \qquad
    % \includegraphics{tube}
    \caption{A complete $5$-gon inscribed in a cubic, a complete $4$-gon circumscribed around a conic and a pentagon simultaneously inscribed 
in a conic and circumscribed around another}
  \end{figure}

We say that a $n$-gon (respectively a complete $n$-gon) is inscribed in a given curve if this curve passes through the  $n$-vertices (respectively the   $\binom{n}{2}$ vertices).
We say that a $n$-gon, or a complete $n$-gon, is circumscribed around a smooth conic $C$ if the sides of the polygon, i.e. the $n$ lines, are tangent to the $C$.

\section{Schwarzenberger bundles}
First of all let us introduce a vector bundle $E_{n,C}$  naturally associated to 
any set of $n$ lines tangent to a fixed smooth conic $C\subset \P^2$  (they were defined by Schwarzenberger in \cite{S}). A set of $n$ lines tangent 
to $C$ corresponds by projective duality to a degree $n$ divisor on the dual conic $C^{\vee}\subset \P^{2\vee}$.
According to the isomorphism $C^{\vee}\simeq \P^1$ we can consider the subvariety 
 $X \subset \P^2 \times \P^{1}$ defined by the equation $x_0u^2+x_1uv+x_2v^2=0$ ($(x_0,x_1,x_2)$ are the homogeneous 
coordinates on $\P^2$ and $(u,v)$ the homogeneous coordinates on $\P^1$) 
 and the
projections $p$ and $q$ respectively on $\P^2$ and $\P^1$.
$$ \begin{CD}
 X @>q>> \P^{1} \\ @VpVV \\ \P^{2}
\end{CD}
$$  
The variety $X$ is  a double cover of $\P^2$ ramified along $C$. If  $x\in \P^1$ then $p(q^{-1}(x))$ is a line
in $\P^2$ tangent to $C$. If, instead of considering a point, we are considering a line bundle on $\P^1$
we will find a vector bundle of rank two on $\P^2$ by taking the  direct image of  its inverse image.
Moreover, following Scwharzenberger, we know a very explicit resolution of this bundle. Indeed, tensorizing the following exact sequence 
$$ 
\begin{CD} 0 @>>> \O_{\P^2\times \P^1}(-1,-2) @>>>\O_{\P^2\times \P^1} @>>> \O_{X} @>>>0, 
\end{CD}
$$
by $q^{*}\O_{\P^1}(n)$ and taking its direct image by $p$ we have :
$$ 
 0 \longrightarrow  \mathrm{H}^0(\O_{\P^1}(n-2))\otimes \O_{\P^2}(-1) \stackrel{M}\longrightarrow   \mathrm{H}^0(\O_{\P^1}(n))\otimes \O_{\P^2} \longrightarrow  E_{n,C} 
\longrightarrow 0.
$$
The map $M$ can be represented by the matrix of linear forms : 
$$ M= \left (
             \begin{array}{ccccc}
              x_0 &     &         &    \\
              x_1 & x_0 &         &    \\
              x_2 & x_1 & \ddots  &     \\
                  & x_2 & \ddots  & x_0 \\
                  &     & \ddots  & x_1 \\
                  &     &         & x_2 
             \end{array}
      \right )
$$
Let us show that 
the zero locus $Z(s)$ of a non zero  section $s\in \mathrm{H}^0(E_{n,C})$  is the set of $\binom{n}{2}$ vertices of the 
  $n$ tangent lines to $C$ given by the corresponding $n$ points on $C^{\vee}$. We denote by $D_n$ this set of $n$ points on $C^{\vee}$.
Since $\mathrm{H}^0(\O_{\P^1}(n))=\mathrm{H}^0(E_{n,C})$, the section $s$
corresponds to an
 hyperplane $H_s \subset \P(\mathrm{H}^0(\O_{\P^1}(n)))$. This hyperplane meets  the image $v_n(\P^1)$ of $\P^1\simeq C^{\vee}$ in $ \P(\mathrm{H}^0(\O_{\P^1}(n)))$ 
(by the Veronese imbedding $v_n$)
 along $n$ points which correspond to the points  of the divisor $D_n$. The section $s$
induces a rational map
$
\pi_s : \P^2 \longrightarrow \P((E_{n,C})^{\vee})
$
which is not defined over the zero-scheme $Z(s)$. More precisely let $x$ be a point in $\P^2$ and  $L_x\subset \P^{2\vee}$ its dual line. This dual line
corresponds by the Veronese morphism  to a two-secant line of $v_n(\P^1)$ (call it $L_x$ again). 
If  $L_x$ is not a two secant line to $D_n$ there is exactly one intersection point 
$L_x\cap H_s$ which is the image of $x$ by $\pi_s$. Conversely the map $\pi_s$ is not well defined when  $L_x\subset H_s$, i.e. when 
  $L_x$ is  a two-secant line to $D_n$, or equivalently when   $x$  is a vertex of two tangent lines to $C$ along 
$D_n$.

\section{Darboux theorem}
We can prove now the so-called darboux theorem (\cite{Da}, page 248).
\begin{thm}
\label{da}
Let  $S\subset \P^2$ be a  curve of degree $(n-1)$. If there is a 
complete $n$-gon (polygon with $n$ sides)  tangent to a smooth conic  $C$ and inscribed into $S$, then there 
are  infinitely many of them.
\end{thm}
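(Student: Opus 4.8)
The plan is to translate the entire statement into the language of sections of the Schwarzenberger bundle $E_{n,C}$ and then exhibit the infinitely many complete $n$-gons as a pencil of such sections. By the computation carried out in the preliminaries, a complete $n$-gon tangent to $C$ is precisely the zero locus $Z(s)$ of a nonzero section $s\in\mathrm{H}^0(E_{n,C})$, so ``inscribed in $S$'' means exactly $Z(s)\subset S$. From the Schwarzenberger resolution one reads off the Chern classes $c_1(E_{n,C})=n-1$ and $c_2(E_{n,C})=\binom{n}{2}$; in particular $\det E_{n,C}=\O_{\P^2}(n-1)$, and a general section vanishes on the expected $\binom{n}{2}$ points, in codimension two.

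First I would attach to the section $s_0$ defining the hypothesized $n$-gon its Koszul sequence
$$0 \longrightarrow \O_{\P^2} \stackrel{s_0}{\longrightarrow} E_{n,C} \stackrel{\wedge s_0}{\longrightarrow} \I_{Z(s_0)}(n-1) \longrightarrow 0,$$
in which the second arrow sends a local section $e$ to $e\wedge s_0\in\det E_{n,C}=\O_{\P^2}(n-1)$, a section vanishing on $Z(s_0)$. The hypothesis that the degree $(n-1)$ curve $S$ passes through the $\binom{n}{2}$ vertices says exactly that $S\in\mathrm{H}^0(\I_{Z(s_0)}(n-1))$. Passing to cohomology and using $\mathrm{H}^1(\O_{\P^2})=0$, the map $\mathrm{H}^0(E_{n,C})\to\mathrm{H}^0(\I_{Z(s_0)}(n-1))$ is surjective, so $S$ lifts to a section $s_1\in\mathrm{H}^0(E_{n,C})$ with $s_0\wedge s_1=S$ as divisors in $|\O_{\P^2}(n-1)|$. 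Since $S\neq 0$, the sections $s_0$ and $s_1$ are linearly independent.

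The decisive point is then that the entire pencil $\langle s_0,s_1\rangle$ is inscribed in $S$. For $t=\lambda s_0+\mu s_1$ one computes $t\wedge s_0=\mu\,(s_1\wedge s_0)=\pm\mu S$, and since $Z(t)\subset\{t\wedge s_0=0\}$ we obtain $Z(t)\subset S$ for every $[\lambda:\mu]\in\P^1$ (the case $\mu=0$ being the original hypothesis). As each section of $E_{n,C}$ is a degree $n$ form on $C^{\vee}\simeq\P^1$ whose $n$ roots are the tangency points, distinct members of the pencil yield distinct families of $n$ tangent lines, hence distinct complete $n$-gons. This produces a $\P^1$ of complete $n$-gons tangent to $C$ and inscribed in $S$, which is the desired conclusion.

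The main obstacle I expect is twofold: identifying the connecting map $E_{n,C}\to\I_{Z(s_0)}(n-1)$ with wedging against $s_0$ (so that the lift $s_1$ really recovers $S$ as $\{s_0\wedge s_1=0\}$), and checking that the members of the pencil are genuinely distinct, nondegenerate $n$-gons rather than one configuration repeated. The former is the standard Koszul description of the codimension-two zero scheme of a rank-two bundle, valid here because $Z(s_0)$ is purely zero-dimensional; the latter follows from the injectivity of $s\mapsto Z(s)$ on $\P(\mathrm{H}^0(E_{n,C}))$ recorded in the preliminaries.
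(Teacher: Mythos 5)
Your proposal is correct and follows essentially the same route as the paper: the Koszul sequence of the section $s_0$ defining the given $n$-gon, surjectivity of $\mathrm{H}^0(E_{n,C})\to\mathrm{H}^0(\mathcal{I}_{Z(s_0)}(n-1))$ from $\mathrm{H}^1(\O_{\P^2})=0$, a lift $s_1$ of $S$, and the observation that every member of the pencil $\langle s_0,s_1\rangle$ has its zero locus on the determinant curve $S$. Your explicit computation $t\wedge s_0=\mu(s_1\wedge s_0)$ and the remark on distinctness of the resulting $n$-gons merely spell out details the paper leaves implicit.
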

\begin{proof}
I recall here a proof already written in \cite{Va}.
A complete $n$-gon circumscribed around $C$ and inscribed into $S$ corresponds to a  non-zero global section 
$s\in  \mathrm{H}^0(E_{n,C})$ vanishing along its vertices $Z(s)$:
$$ 
\begin{CD} 0 @>>> \O_{\P^2} @>>>E_{n,C} @>>>\mathcal{I}_{Z(s)}(n-1) @>>>0.
\end{CD}
$$
By the remark of the previous section (\ref{prelim}), the curve $S$ corresponds to a global section of  $\mathcal{I}_{Z(s)}(n-1)$.
Since the map 
$$\begin{CD} 
 \mathrm{H}^0(E_{n,C})@>>> \mathrm{H}^0(\mathcal{I}_{Z(s)}(n-1))  
  \end{CD}
$$ is surjective, there exists  a non-zero section
$t\in  \mathrm{H}^0(E_{n,C})$ (i.e. another $n$-gon)
such that the determinant
$$\begin{CD} 
 \O_{\P^2}^2@>(s,t)>> E_{n,C}  
  \end{CD}
$$
 is the equation of $S$. This proves the theorem since any linear combination of $s$ and $t$  vanish along the vertices of a 
complete $n$-gon.
\end{proof}
 \begin{figure}[h!]
    \centering
    \includegraphics[height=6.5cm]{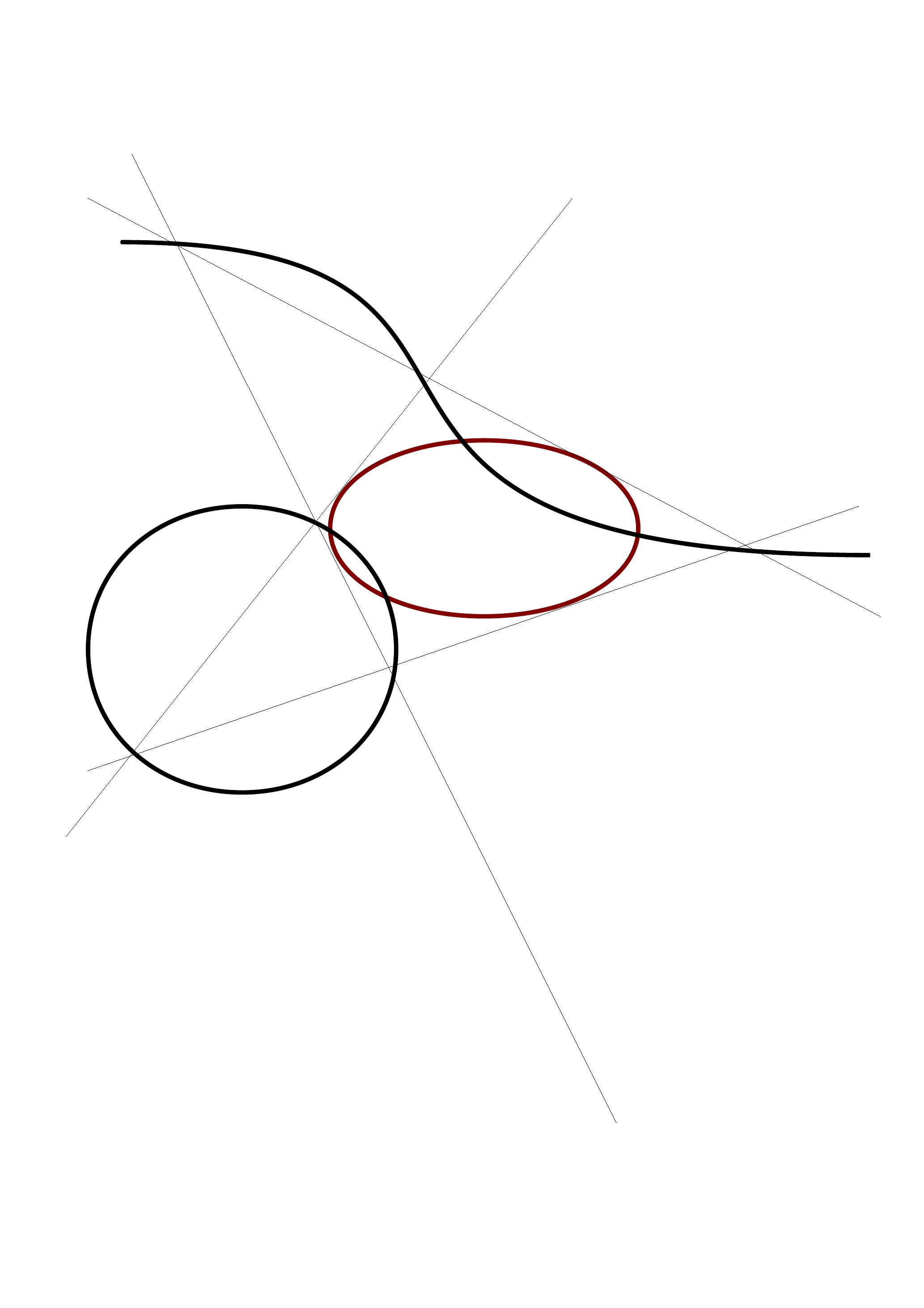}
    % \qquad \qquad
    % \includegraphics{tube}
    \caption{A complete circumscribed 4 gon and a cubic Poncelet curve}
  \end{figure}
These curves described by Darboux are called \textit{Poncelet curves}. When $n=5$ they are the so-called Luröth quartics (see \cite{OS}).
\section{Poncelet theorem}
Let us now consider $n$-gons   that are simultaneously inscribed in a smooth conic and circumscribed around a other one.
 For these configurations Poncelet proved (\cite{Po}, page 362) :

%%%%%%%%%%
\begin{thm}
Let  $C\subset \P^2$ and $D\subset \P^2$ be two smooth conics such that there exist one $n$-gon inscribed in $D$ and circumscribed around $C$.
Then there are infinitely many of such $n$-gons.
\end{thm}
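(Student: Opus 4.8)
The plan is to encode the hypothesised polygon as a global section of the Schwarzenberger bundle $E_{n,C}$ and then to restrict $E_{n,C}$ to the conic $D$. A polygon circumscribed around $C$ is a set of $n$ tangent lines, hence by the first remark it corresponds to a nonzero section $s_0\in\mathrm{H}^0(E_{n,C})$ whose zero locus $Z(s_0)$ is the set of $\binom{n}{2}$ vertices of the associated complete $n$-gon. The Poncelet hypothesis is that the $n$ \emph{consecutive} vertices lie on $D$. Since a smooth conic meets each of the $n$ sides in exactly two points, these two points must be the two vertices adjacent to that side; consequently no further vertex of the complete $n$-gon lies on $D$, and $Z(s_0)\cap D$ is exactly the set of $n$ polygon vertices, a reduced divisor of degree $n$ on $D\simeq\P^1$.

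Next I restrict the data to $D$. From the exact sequence $0\to\O_{\P^2}\to E_{n,C}\to\mathcal{I}_{Z(s)}(n-1)\to0$ of the Darboux section one reads $\det E_{n,C}=\O_{\P^2}(n-1)$, so $E_{n,C}|_D$ is a rank two bundle of degree $2(n-1)$ on $D\simeq\P^1$, hence splits as $\O_D(a)\oplus\O_D(b)$ with $a\ge b$ and $a+b=2(n-1)$; note $b\le n-1$. The section $s_0|_D$ vanishes on the degree $n$ divisor $Z(s_0)\cap D$. A section $(p,q)$ with $q\ne0$ has at most $\min(a,b)=b\le n-1$ common zeros, so it cannot vanish on $n$ points; therefore $q=0$, the zero divisor has degree $a$, and $a=n$, $b=n-2$. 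Thus the mere existence of one Poncelet $n$-gon already forces the jumping splitting
$$
E_{n,C}|_D\;\simeq\;\O_D(n)\oplus\O_D(n-2),
$$
with $s_0|_D$ lying in the summand $\O_D(n)$, which (a map $\O(n)\to\O(n)\oplus\O(n-2)$ having trivial second component, as $\mathrm{H}^0(\O(-2))=0$) is the \emph{unique}, hence canonical, maximal sub-line-bundle. Pinning down this splitting and the membership of $s_0|_D$ in the degree $n$ summand is the main obstacle; it is exactly where the hypothesis is used, since for a balanced restriction $\O_D(n-1)^{\oplus2}$ a sub-bundle section would vanish on only $n-1$ points and would not close up into an $n$-gon.

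Finally I produce the pencil. Because the degree $n$ summand is canonical, the projection onto the quotient $\O_D(n-2)$ is intrinsic, and restriction gives a linear map
$$
\rho\colon \mathrm{H}^0(E_{n,C})\longrightarrow \mathrm{H}^0\bigl(\O_D(n-2)\bigr).
$$
Here $\dim\mathrm{H}^0(E_{n,C})=\dim\mathrm{H}^0(\O_{\P^1}(n))=n+1$ while $\dim\mathrm{H}^0(\O_D(n-2))=n-1$, so $W:=\ker\rho$ has dimension at least $2$. For every $s\in W$ the restriction $s|_D$ lies in $\O_D(n)$; when $s|_D\ne0$ it vanishes on a degree $n$ divisor of $D$, and by the incidence count of the first paragraph (each side meeting $D$ in its two neighbouring marked vertices) these $n$ points are the vertices of a closed $n$-gon inscribed in $D$ and circumscribed around $C$. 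Since $s_0\in W$ and $\dim W\ge2$, picking $s_1\in W$ independent of $s_0$ yields a pencil whose general member $as_0+bs_1$ restricts to $(ap_0+bp_1,0)$ with $ap_0+bp_1\ne0$; distinct $(a:b)$ give distinct $n$-tuples of tangent lines, hence infinitely many Poncelet $n$-gons, which proves the theorem.
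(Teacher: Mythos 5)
Your argument is correct, and its engine is the same as the paper's: the restriction $E_{n,C}|_D$ splits as $\O_{\P^1}(n)\oplus\O_{\P^1}(n-2)$ (the paper writes $\O_D(\frac{n}{2})\oplus\O_D(\frac{n-2}{2})$, normalizing by $\deg \O_{\P^2}(1)|_D=2$), and the kernel $W$ of $H^0(E_{n,C})\to H^0(\O_{\P^1}(n-2))$ has dimension at least $(n+1)-(n-1)=2$; your $W$ is precisely $H^0(F)$ for the paper's elementary modification $0\to F\to E_{n,C}\to \O_D(\frac{n-2}{2})\to 0$. There are two genuine differences. First, you \emph{prove} the splitting type and the fact that $s_0|_D$ lies in the unique maximal sub-line-bundle (via the bound $\min(a,b)\le n-1$ on common zeros versus the degree-$n$ reduced divisor $Z(s_0)\cap D$), where the paper simply asserts the decomposition; that is a real expository improvement. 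Second, your endgame differs: the paper pushes the pencil through Darboux's theorem, producing a Poncelet curve $\Gamma_2$ of degree $n-1$ having $D$ as an irreducible component, and then performs an intersection count on $D\subset\Gamma_2$; you bypass Poncelet--Darboux curves entirely and read the polygon off from the degree-$n$ zero divisor of $s|_D$ together with the incidence count ($n$ marked vertices, $2n$ incidences, each line meeting $D$ in at most two points), which is more self-contained. One caveat you share with the paper: the resulting $2$-regular incidence structure only shows that the $n$ chords decompose into a disjoint union of closed polygons with $n$ sides in total; that the general member of the pencil is a single $n$-gon (rather than, for composite $n$, several shorter inscribed--circumscribed polygons) needs the additional remark that the combinatorial type is locally constant along the pencil and is a single $n$-cycle at $s_0$ by hypothesis.
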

%%%%%%%%%
 \begin{figure}[h!]
    \centering
    \includegraphics[height=6.5cm]{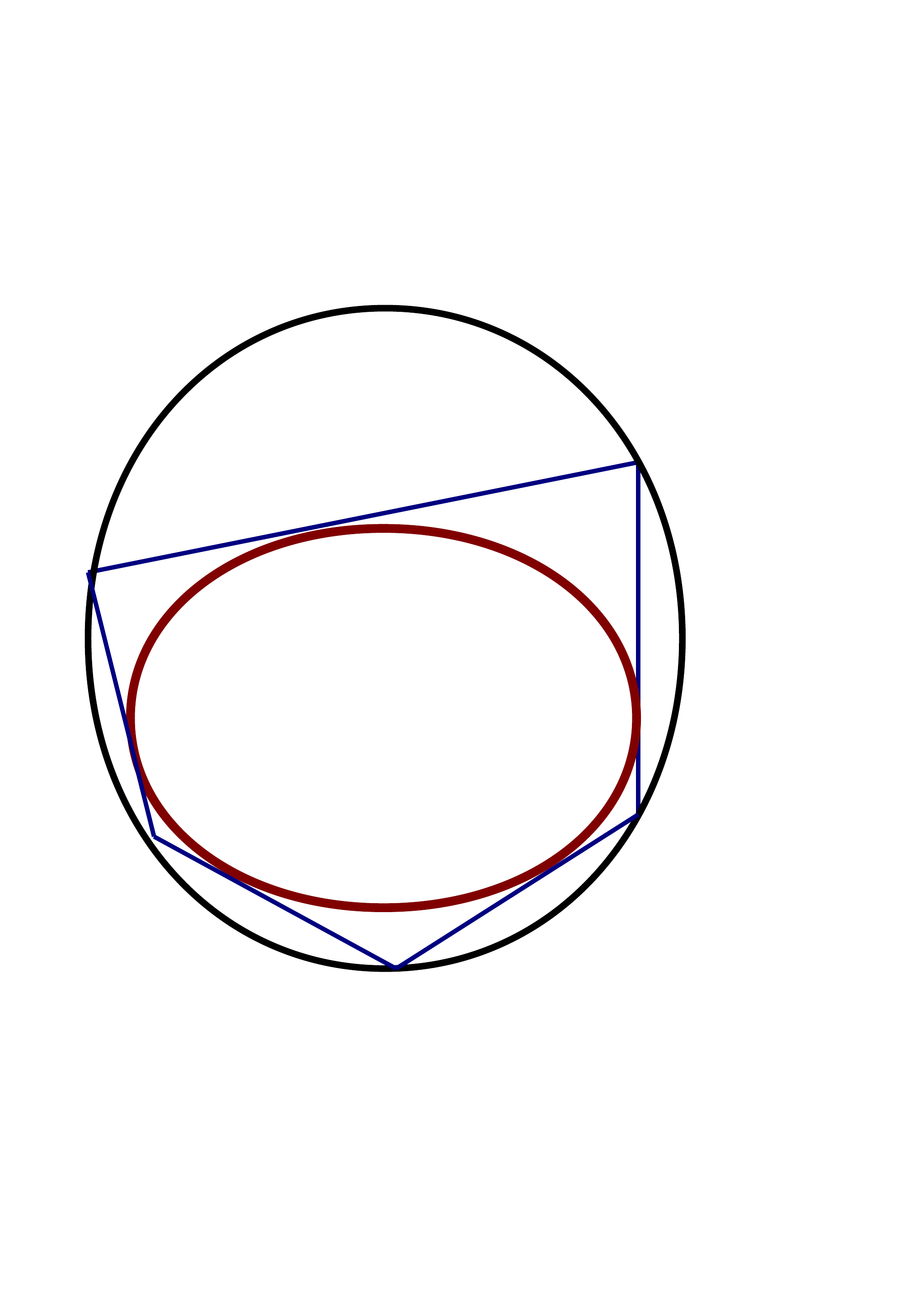}
    % \qquad \qquad
    % \includegraphics{tube}
    \caption{pentagon inscribed and circumscribed }
  \end{figure}
\begin{proof}
Let us consider one such $n$-gon. It is the union of $n$ lines
 $l_1, \cdots, l_n$ with the $\binom{n}{2}$ vertices $l_i\cap l_j$ for $1\le i,j \le n$ and $i\neq j$. 
There is a non-zero section $s\in \mathrm{H}^0(E_{n,C})$ vanishing along the vertices of these lines. We denote by $Z(s)$ the set of these vertices.

\smallskip

Let us tensorize the following exact sequence  
$$ 
\begin{CD} 0 @>>> \O_{\P^2} @>>>E_{n,C} @>>>\mathcal{I}_{Z(s)}(n-1) @>>>0
\end{CD}
$$
by $\O_D$. Since  $D\cap Z(s)$ consists in $n$-points, it induces the following decomposition of $E_{n,C}$ along $D$ : 
$$ E_{n,C}\otimes \O_D= \O_D(\frac{n-2}{2})\oplus \O_{D}(\frac{n}{2}).$$ 
According to this decomposition, we  consider the following exact sequence : 
 $$\begin{CD}
0@>>> F @>>> E_{n,C} @>>> \O_D(\frac{n-2}{2}) @>>>0  
  \end{CD}
$$
where  $F$ is a rank two vector bundle over  $\P^2$. Taking the cohomology long exact sequence we verify immediately 
that  $h^0(F)\ge 2$. Then, let us consider a pencil of sections of  $F$ and also the pencil of sections of  $E_{n,C}$ induced by it.
We obtain a commutative diagram:
$$\begin{CD}
@. \O_{\P^2}^2 @= \O_{\P^2}^2 @. @.\\ 
@. @VVV  @VVV @.\\  
0@>>> F @>>> E_{n,C} @>>> \O_D(\frac{n-2}{2}) @>>>0 \\
@. @VVV  @VVV @|\\
0@>>> \mathcal{L}_1 @>>> \mathcal{L}_2 @>>> \O_D(\frac{n-2}{2}) @>>>0.
  \end{CD}
$$
The sheaf  $\mathcal{L}_2$ is supported by a  curve  $\Gamma_2$ of degree $(n-1)$ that is the determinant of a pencil of sections 
of $E_{n,C}$. This curve $\Gamma_2$ is a  Poncelet curve. Then a general  point on $\Gamma_2$ is a vertex of a complete 
$n$-gon inscribed in $\Gamma_2$ and circumscribed around $C$.  Moreover any intersection point of the $n$ lines forming the $n$-gon
with $\Gamma_2$ is a vertex of this $n$-gon (it is clear by B\'ezout theorem since $n(n-1)=2\times \binom{n}{2}$).
Let $\Gamma_1$ be the curve supporting the sheaf $\mathcal{L}_1$. We have of course  $\Gamma_2=D\cup \Gamma_1$.
  Then  $D$ is an irreducible component of a Poncelet curve and by the way any general  point on $D$ is the vertex of complete $n$-gon inscribed in $\Gamma_2$.
Then this configuration meets the conic $D$ in at least  (because there are $n$ lines) and at most  (because they are vertices and the decompostion 
of the bundle along $D$ is fixed) $n$-points, so exactly
 $n$-points, each counting doubly.

\end{proof}


\begin{thebibliography}{OV-Wykno}
\bibitem[BB]{BB} Barth, W., Bauer, Th. : Poncelet theorems. Expo. Math.
14-2 (1996),
125--144.

\bibitem[Be]{Be} Berger, M. G\'eom\'etrie vivante ou l'\'echelle de Jacob. Cassini (2009).


\bibitem[BKOR]{BKOR}  Bos, H.J.M., Kers, C., Oort, F., Raven, D.W. : Poncelet's closure theorem, its history, its modern formulation, a
comparison of its modern
proof with those by Poncelet and Jacobi, and some mathematical remarks
inspired by these early proofs. 
Expo. Math. 5 (1987
), 289--364. 


\bibitem[Da]{Da} Darboux, G. : Principes de g\'eom\'etrie analytique. Paris, Gauthier-Villars, 1917.

\bibitem[GH]{GH} Griffiths, P., Harris, J. : On Cayley's explicit solution to Poncelet's porism.  Enseign. Math. (2)  24  (1978), no. 1-2, 31--40.


\bibitem[OS]{OS} Ottaviani, G., Sernesi, E. :  On the hypersurface of L\"uroth quartics. Michigan Math. J. 59 (2010), no. 2, 365–394.

\bibitem[Po]{Po} Poncelet, J. V. : Trait\'e des propriétés projectives des figures, Bachelier Libraire. Edition de 1822.

\bibitem[S]{S} R. L. E. Schwarzenberger, 
 Vector bundles on the projective plane,
Proc. London Math. Soc. 11, (1961), 623--640.

\bibitem[Va]{Va} Vall\`es, J. :    Fibr\'es de Schwarzenberger et coniques de
droites sauteuses,
Bull. Soc. Math. France 128, (2000), 433-449.
\end{thebibliography}
\end{document}